  \newtheorem*{theorem*}      {Theorem}
	\newtheorem*{conjecture*}   {Conjecture}
  \newtheorem{theorem}        {Theorem}
  \newtheorem{lemma}          {Lemma}
  \newtheorem*{lemma*}        {Lemma}
  \newtheorem{definition}     {Definition}
  \newtheorem{corollary}      {Corollary}
  \newtheorem{proposition}    {Proposition}
  \newtheorem{notation}       {Notation}
\definecolor{Red}{cmyk}{0,1,1,0}
\definecolor{Blue}{cmyk}{1,1,0,0}
\newcommand{\ba}{\begin{array}}
\newcommand{\ea}{\end{array}}
\newcommand{\be}{\begin{equation}}
\newcommand{\ee}{\end{equation}}
\newcommand{\ben}{\begin{enumerate}}
\newcommand{\een}{\end{enumerate}}
  \def\d{\mathop{\textrm{\rm d}}\nolimits}                  
  \def\exp{\mathop{\textrm{\rm exp}}\nolimits}              
\newcommand{\calb}{\ensuremath        {\mathcal{B}}}
\newcommand{\calf}{\ensuremath        {\mathcal{F}}}
\let\a=\alpha
\let\g=\gamma
\tikzset{
  treenode/.style = {align=center, inner sep=0pt, text centered,
    font=\sffamily},
  arn_r/.style = {treenode, circle, blue, draw=blue, 
    text width=1em, very thick},
}
\begin{document}

\title{Counting Contours on Trees }
\author{Noga Alon\thanks{Research supported in part by BSF grant
2012/107 and by ISF grant 620/13.}
\\
\footnotesize{Sackler School of Mathematics and
Blavatnik School of Computer Science, Tel Aviv University}\\
\footnotesize{\texttt{nogaa@post.tau.ac.il}}
\\[0.3cm]
Rodrigo Bissacot\thanks{Partially supported by the Dutch stochastics 
cluster STAR (Stochastics - Theoretical and Applied Research), 
also supported by FAPESP Grants 11/16265-8, 2016/08518-7 and 
CNPq Grants 486819/2013-2, 312112/2015-7.}\\
\footnotesize{Institute of Mathematics and Statistics - IME-USP - 
University of S\~ao Paulo}\\
\footnotesize{\texttt{rodrigo.bissacot@gmail.com}}
\\[0.3cm]
Eric Ossami Endo\thanks{Supported by FAPESP Grants 14/10637-9 
and 15/14434-8.} \\
\footnotesize{Institute of Mathematics and Statistics - IME-USP - 
University of S\~ao Paulo}\\
\footnotesize{Johann Bernoulli Institute for Mathematics and Computer 
Science - University of Groningen}\\
\footnotesize{\texttt{eric@ime.usp.br}}\\
}
\maketitle

\begin{abstract}
We calculate the exact number of contours of size $n$ containing a fixed
vertex in $d$-ary trees and provide sharp estimates for this number
for more general trees. We also obtain a characterization of the locally
finite trees with infinitely many contours of the same size
containing a fixed vertex.
\end{abstract}

{\footnotesize{\bf Keywords:} contours, trees, 
cut sets, Peierls, Catalan numbers}

{\footnotesize {\bf Mathematics Subject Classification (2000):} 
05C05, 05-XX, 05Cxx, 82-XX, 05C30}

\section*{Introduction}
\label{intro}

After the seminal paper of Rudolf Peierls \cite{Pe}, the standard
technique to prove the existence of phase transitions in spin
systems (Ising model type, for instance) goes by a \textit{contour
argument}. Roughly speaking, we need to define objects usually called
\textit{contours}, notions of size (length or surface) and interior for
these objects. Furthermore, for a fixed vertex $x_0$ of a graph $G$ and,
for each $n \in \mathbb{N}$, we need to estimate the number of contours
of size $n$ in $G$ with $x_0$ in their interiors.

A standard calculation in this approach is to control expressions as below:

\begin{equation}\label{fundamental}
\sum_{ C \odot x_0} w(|C|) = \sum_{n=1}^{\infty} \sum_{\substack{ C \odot x_0\\ |C| = n}} w(|C|) = \sum_{n=1}^{\infty} w(n)  \sum_{ \substack{C \odot x_0 \\ |C| = n}} 1,
\end{equation}
where $|C|$ denotes the size of the contour $C$ and $C \odot x_0$
denotes the fact that $x_0$ belongs to the interior of $C$. Usually
the function $w: \{ \text{contours} \} \rightarrow \mathbb{R}^{+}$
depends only on the size of the contour and not on its position in the
graph $G$. For the standard Ising model on $\mathbb{Z}^2$, the function
is given by $w(C)= w(|C|) = \exp(- 2 \beta |C|)$ where $\beta$ is the
inverse of the temperature. Then, to control (\ref{fundamental}) we
need to estimate $\sum_{\substack{C \odot x_0 \\ |C| = n}} 1$ and for
this purpose generating functions are very powerful tools. We can find
similar expressions to (\ref{fundamental}) in almost all papers using
the \textit{Peierls argument}. The readers interested in the proof of
the existence of phase transition using the Peierls contours can check
standard books on the field \cite{Bo, Ge, FiSe, Ru}. The original Peierls
argument \cite{Pe} was done for the Ising model on $\mathbb{Z}^2$,
but we can define contours for any $\mathbb{Z}^d$ with $d \geq 3$ and
the argument works as well. The estimates of the number of contours
help us to give bounds for the critical temperature of the models, see
\cite{BB07, LM98}. These facts show that the mathematical problem of
counting contours on graphs has important consequences in statistical
physics and naturally emerges.

Moreover, the problem of counting finite objects on graphs (subgraphs,
paths with a fixed length, etc) is important for mathematicians and it
is a classical problem in discrete mathematics. The history about the
question of counting contours of the same size containing a fixed unit
cube on  $\mathbb{Z}^d$ ($d \geq 2$) is the following: David Ruelle proved
that there exist at most $3^n$ contours of size $n$ containing a fixed
unit cube; Lebowitz and Mazel \cite{LM98} proved that there are between
\((C_1d)^{n/2d}\) and \((C_2d)^{64n/d}\); and finally, differently from
the previous approaches and using \emph{generating functions}, Balister
and Bollob\'as \cite{BB07} improved these bounds showing that
there are between \((C_3d)^{n/d}\) and \((C_4d)^{2n/d}\) contours of
size \(n\) ($C_1, C_2, C_3 \ \text{and} \ C_4$ are constants).

In the last years, some attention was given to the Ising model on trees
instead of $\mathbb{Z}^d$, and there is more than one definition of
contour for trees and general graphs \cite{ APS12, BB99, LM98, Rozi,
Rozi1, Rozi2, Rozi3}.

In this note, we consider a definition proposed by Babson and Benjamini
\cite{BB99}. We will see that this definition on trees implies
that the number of contours of size $n$ coincides with the number of
\textit{external boundaries} with $n$ vertices, a standard notion
used by the combinatorics community. In the original paper, they used the
term \textit{cut sets} as is usual for combinatorialists, the context
was percolation theory, see also \cite{BeB99}. This definition 
was later 
considered in \cite{APS12} in the study of bounds for the critical
percolation probability $p_c$ in general graphs.

Our contribution is to clarify the connection between contours on trees
and natural objects in graph theory. Inspired by Balister and Bollob\'as
\cite{BB07}, we show that in the case of regular trees (and $d$-ary trees)
we can calculate the exact number of contours of size $n$ containing
a fixed vertex $x_0$. We also obtain a characterization for locally
finite rooted trees with infinitely many contours of some fixed size $n$
involving the root. In particular, we prove that we have infinitely many
contours of the same size if and only if the tree has an {\it infinite
independent path}. Nonamenable trees are the trees in which the length
of the independent paths is uniformly bounded. In particular, trees
which contain an infinite independent path are amenable trees. On the
other hand, for nonamenable graphs with bounded degree, (in particular,
$d$-ary trees) one possibility for the proof of the phase
transition in
Ising models and for the study of ground states is to count the number
of connected components of a fixed size containing a vertex,
instead of
counting the number of contours, see \cite{GJRSS, JS}.

This note is organized as follows: in Section \ref{Definitions and
Notations} we give some basic definitions of graph theory, introduce
the precise definition of a contour, and show the connection of these
objects with external boundaries in graphs. In Section \ref{Contours
on d-ary and regular trees} we give explicit expressions for the number
of contours of size $n$ in regular and $d$-ary trees. In addition, we
show that the binary trees are extremal objects with respect to the number
of contours of a fixed size.  More precisely, if we fix $n$, the number
of contours of size $n$ containing a fixed vertex  
is maximum in binary trees when we consider
locally finite 
trees in which each vertex has at least two children. In Section
\ref{Infinitely many contours with size $n$} we give a geometric
characterization of trees with infinitely many contours of the same
size containing a fixed vertex. It turnes out  that this is equivalent to
the existence of what's called an infinite independent path
in the tree. 

\section{Definitions and Notations}\label{Definitions and Notations}

The graphs $G=(V,E)$ considered are always simple, undirected, connected,
with countably infinite number of vertices. All the graphs are locally
finite, in other words, with finite degree for each vertex of $V$. The
degree of a vertex $x$ is the number of edges which are incident to $x$,
denoted by $\d(x)$. A \textit{path} $\g$ is an alternating sequence of
vertices and edges $\gamma = (v_0, e_1, v_1, e_2, ..., e_k, v_k)$ where
$e_i = v_{i-1}v_i=: \{v_{i-1}, v_i \}$ and all vertices are
distinct, with the possible exception of $v_0,v_k$.
The vertices $v_1, v_2, ..., v_{k-1}$ are called \textit{inner
vertices} of $\g$. An \emph{independent path}~$\gamma$ in a graph ~$G$ is
a path where all inner vertices of~$\gamma$ have degree two in~$G$. When
$v_0 = v_k$ we say that the path $\g$ is a \textit{cycle}. We say that
a graph $G$ is a \textit{tree} if it is connected and has no cycles.

Given a vertex $x$ and a 
subset of vertices $A \subset V$, let $\d_G(x,A)$ denote the number
$
\d_G(x,A) = \min \{ |\gamma| ; \gamma \ \text{is a path in}\ G \ 
\text{connecting}\  x \ \text{to a vertex of} \ A \},
$
where for each path $\gamma$ in $G$, $|\gamma |$ denotes the number of
edges of $\gamma$. Thus $\d_G(x,A)$ is the usual distance in the graph $G$
between $x$ and $A$. The set $\partial_v^{ext}A=\{x \in V\setminus A:\
\d_G(x, A)=1\}$ is the \emph{external boundary} of $A$.

Let $G=(V,E)$ be a graph, we say that a graph $\tilde{G}$ is a
\textit{minor} of $G$, denoting by $\tilde{G} \preceq G$, when
$\tilde{G}$ is obtained from $G$ after a sequence of the following
operations: contracting some edges, deleting some edges and/or isolated
vertices. We \textit{contract} an edge $e=xy$ and obtain a graph that we
denote by  $G/e$ when we delete the edge  $e$ from $E$, add to $E$
the
collection of edges $\{ az; xz \in E \ \text{or} \ yz \in E\}$ where $a$
is a new vertex replacing the vertices $x$ and $y$, and
remove all resulting parallel edges.
Thus $V(G/e) = V(G)\backslash(\{x,y\}) \cup \{a\}$.
We $\textit{delete}$ an edge $e=xy$ when we
remove the edge from the graph but keep the vertices on it, after the
process we obtain a new graph $G\setminus e = (V, E \setminus \{e\})$,
for a finite collection of edges $C$ the procedure is the same,
keeping the
vertices and deleting the edges: $G\setminus C = (V,E \setminus C ) $.

\begin{definition}
Given a graph~$G=(V,E)$, a finite set~$C \subset E$ is called
a \emph{contour} if~$G\setminus C$ has exactly one finite connected
component, and it is minimal with respect to this property. That is, for
all edges~$e\in C$ the graph~$(V,E\setminus(C\setminus e))$ does not
have a finite connected component.

If~$C$ is a contour in~$G$ then we denote by~$G_{C}=(I_{C},E_{C})$
the unique finite connected component of~$G\setminus C$.
\end{definition}

This notion was originally defined by Babson and Benjamini in
\cite{BB99} where the authors used \textit{minimal cut set} instead
of \textit{contour}. The definition was later used in \cite{APS12} in
the study of
percolation problems on graphs.

Let~$\calf_G$ be the set of all contours of~$G$.  We denote
by~$\calf_G^n$ the set of all contours of $G$ of size $n$; by~$\calf_G(x)$
($\calf_G^n(x)$) the set of all contours~$C\in \calf_G$ ($C \in
\calf_G^n$) such that $x \in I_{C}$.

Let~$T_d$ be a rooted tree such that all vertices have~$d$ children, i.e.,
the root has degree~$d$ and the other vertices have degree~$d+1$. The
tree~$T_d$ is called  
\emph{d-ary tree}. A~$2$-tree is called \emph{binary
tree}. When all the vertices of a tree have the same degree $d$ we say
that the tree is a $d$\textit{-regular tree}.

\begin{center}
\begin{tikzpicture}[line width=1pt,level/.style={sibling distance 
= 3cm/#1, level distance = 1cm}] 
\node {\(x\)}
    child{
            child{
            	child{edge from parent [green]}
							child{edge from parent [green]}
            }
            child{ edge from parent [green]
							child { edge from parent [black]}
							child{edge from parent [black]}
            }                            
    }
    child{edge from parent [green]
            child{edge from parent [black]
							child{edge from parent [black]}
							child{edge from parent [black]}
            }
            child{edge from parent [black]
							child{edge from parent [black]}
							child{edge from parent [black]}
            }
		}
; 
\end{tikzpicture}
\end{center}
\begin{center}
\textit{Example of a  contour of size four in a binary tree $T_2$}
\end{center}

We finish this section showing that in the case of trees there is a
one-to-one relation between contours of size $n$ and 
external boundaries
of size $n$. This proposition will allow us to conclude that for
binary trees the number of contours of size $n$ containing the root is
the $n$-th Catalan number.

\begin{proposition}\label{catalan}

 Let $T=(V,E)$ be a rooted, locally finite and infinite tree. Let
 $x_0$ the root and suppose that $T$ does not have leaves. Let 
$$ 
\calb^n_{T}(x_0)=\{ B\subset V: B \text{ is finite, connected, }x_0\in
 B \text{ and }|\partial^{ext}_v(B)|=n \} 
$$
be the set of finite subtrees
(induced by the vertices) of $T$ containing $x_0$ with external
boundary of size $n$.  Then there is a bijection between
 $\calb^n_{T}(x_0)$ and $\calf^n_{T}(x_0)$.
\end{proposition}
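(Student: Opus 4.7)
The plan is to construct explicit mutually inverse maps $\phi:\calb^n_{T}(x_0)\to\calf^n_{T}(x_0)$ and $\psi:\calf^n_{T}(x_0)\to\calb^n_{T}(x_0)$. The natural candidates are $\phi(B)=\{e\in E : e \text{ has exactly one endpoint in } B\}$, the edge cut associated with $B$, and $\psi(C)=I_{C}$, the vertex set of the unique finite component of $T\setminus C$. A first observation, used repeatedly, is that because $T$ is a tree each $y\in\partial^{ext}_v(B)$ is joined to $B$ by exactly one edge---a second such edge would close a cycle---so the endpoint-outside-$B$ map gives a bijection between $\phi(B)$ and $\partial^{ext}_v(B)$; in particular $|\phi(B)|=n$.

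To check that $\phi(B)\in\calf^n_{T}(x_0)$, I would first note that deleting $\phi(B)$ leaves $B$ as a connected, finite component containing $x_0$. The remaining components are the subtrees of $T$ obtained by removing, for each $y\in\partial^{ext}_v(B)$, the unique edge from $y$ to its neighbor in $B$. Since $T$ has no leaves, every vertex in such a subtree has at least one not-yet-visited neighbor after any finite walk, so iteratively extending produces an infinite ray and the subtree is infinite. Thus $B$ is the \emph{unique} finite component. Minimality is then immediate: restoring any $e\in\phi(B)$ merges $B$ with one of these infinite subtrees, leaving no finite component at all.

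For the converse, given $C\in\calf^n_{T}(x_0)$, I must verify that $I_{C}$ is finite, connected, contains $x_0$ (all direct from the definition of a contour), and has external boundary of size exactly $n$. The key step is to show that every edge of $C$ has exactly one endpoint in $I_{C}$. Minimality of $C$ rules out edges with both endpoints outside $I_{C}$: deleting such an $e$ from $C$ would leave $I_{C}$ untouched as a finite component, violating minimality. Connectedness of $G_{C}$ rules out edges with both endpoints inside $I_{C}$: in a tree, the only path between the two endpoints of $e$ is $e$ itself, so once $e$ is deleted along with the rest of $C$ those vertices are no longer connected within $I_C$. Once this is in hand, the same tree-versus-cycle argument as before yields a bijection between $C$ and $\partial^{ext}_v(I_{C})$, so $|\partial^{ext}_v(I_{C})|=|C|=n$.

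Finally, the two compositions $\phi\circ\psi$ and $\psi\circ\phi$ reduce to the identity essentially by inspection: the finite component of $T\setminus\phi(B)$ is plainly $B$, and the edge cut of $I_{C}$ was just identified with $C$. The one place where the hypotheses really intervene is in showing that the subtrees hanging off $\partial^{ext}_v(B)$ are infinite, which uses both the no-leaves assumption and the tree structure; this will be the most delicate point, and everything else is routine bookkeeping.
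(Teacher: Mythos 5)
Your proof is correct and rests on the same correspondence the paper uses: a contour $C$ is matched with its finite interior component $I_C$, and the size count $|\partial^{ext}_v(I_C)|=|C|$ follows from the fact that in a tree each external-boundary vertex is joined to the finite component by exactly one edge. You are more explicit than the paper about the inverse direction (verifying that the edge cut of $B$ is genuinely a contour, which is where the no-leaves hypothesis is needed to make the outer components infinite); the paper compresses that verification into ``it is not hard to see that $f$ is a bijective function.''
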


\begin{proof}
We will prove that for each $B \in \calb^n_{T}(x_0)$ there exists a
contour $C$ such that $\partial^{ext}_v(B) = C$.  We define the function
$f:\calf^n_{T}(x_0)\to \calb^n_{T}(x_0)$ in the following way: let $C$
be a contour in $\calf^n_{T}(x_0)$. Remove all edges of $C$ from the tree
$T$. By definition of contour, we get a finite connected component $B$
containing $x_0$.  Define $f(C)=B$. To show that $f$ is well defined,
we shall prove that $|\partial^{ext}_v(B)|=n$. Actually, $B=I_{C}$.

By definition of contour, each edge in $C$ has one endpoint in $B$ and
the other in $V\setminus B$. Let $V_{e}(C)$ be the set of endpoints in
$C \cap (V\setminus B)$. As $|C|=n$ and the graph is a tree, we have
$|V_{e}(C)|=n$.  Clearly $V_{e}(C)\subseteq \partial^{ext}_v(B)$. If
some element $u$ of $\partial^{ext}_v(B)$ does not belong to $V_{e}(C)$,
the edge connecting $u$ with $B$ does not belong to $C$, contradicting
the fact that $C$ is a contour. Thus $f$ is well defined.  It is not hard
to see that $f$ is a bijective function.
\end{proof}

\section{Contours on d-ary and 
regular trees}\label{Contours on d-ary and regular trees}

The main technique in 
this note is the use of generating functions in the investigation
of  counting
problems on trees; this approach produces very clean proofs. Classical
references to the technique are the two books of Richard P. Stanley
\cite{S1, S}.

The well known \textit{Catalan numbers}
$C_{n-1} =\frac{1}{n}\binom{2n-2}{n-1} (n \in \mathbb{N})$, have
lots of interpretations in Combinatorics. In particular, see, e.g.,
\cite{S1,S}, these numbers count the number of contours
in binary trees by Proposition \ref{catalan}. In fact,  let ~$T_2$
be the binary tree with root~$x_0$. For all $n\ge 2$, we have
$|\calf_{T_2}^n(x_0)|=\frac{1}{n}\binom{2n-2}{n-1}$.

Here we present a proof where we calculate the exact number of contours
in $d$-ary trees using generating functions, an alternative 
derivation can be found
in \cite{S}. Let~$\mathbb{R}(\!(z)\!)$ be the ring of formal series
defined by
$$
\mathbb{R}(\!(z)\!)=\left\{ \sum_{k\ge 0}a_kz^k:  
a_k\in \mathbb{R} \right\}.
$$

We define the operator~$[z^n]$ 
which extracts the coefficient of $z^n$ in the series, that is, 
$[z^n](\sum_{k\ge 0}a_kz^k)=a_n$.

The Lagrange Inversion Theorem states that we can compute exactly the
coefficients of a series under certain conditions. The reader can find
a proof of this theorem in \cite{FS09, S}.

\begin{theorem*}[Lagrange Inversion Theorem, Lagrange -- 1770]\label{lagrange}\quad
Let $\phi \in \mathbb{R}(\!(z)\!)$ with $\phi(0)\neq 0$  and ~$f(z)\in z\mathbb{R}(\!(z)\!)$ defined by ~$f(z)=z\phi(f(z))$, then
$$
[z^n]f(z)=[z^{n-1}]\frac{1}{n}\phi(z)^n.
$$
\end{theorem*}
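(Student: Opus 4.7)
The plan is to prove the Lagrange Inversion Theorem by formal residue calculus, working in the field of formal Laurent series that extends $\mathbb{R}(\!(z)\!)$ by allowing finitely many negative powers of $z$. I would first introduce the residue operator $\mathrm{res}_z G(z) := [z^{-1}] G(z)$ and record its two essential properties: (a) $\mathrm{res}_z F'(z) = 0$ for every formal Laurent series $F$, which yields integration by parts $\mathrm{res}_z (F' G) = -\mathrm{res}_z (F G')$; and (b) the change-of-variables rule: if $h(u) \in u\mathbb{R}(\!(u)\!)$ with $h'(0) \neq 0$, then for every Laurent series $G$, $\mathrm{res}_z G(z) = \mathrm{res}_u G(h(u)) h'(u)$.

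Next I would extract a compositional inverse of $f$ from the defining relation. Evaluating $f(z) = z\phi(f(z))$ at $z = 0$ gives $f(0) = 0$, and differentiating and then evaluating at $0$ gives $f'(0) = \phi(0) \neq 0$. Setting $y = f(z)$ turns the defining relation into $z = y/\phi(y)$, so $f$ has the compositional inverse $g(y) := y/\phi(y)$, whose derivative is $g'(y) = (\phi(y) - y\phi'(y))/\phi(y)^2$.

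The core computation is then: write $[z^n] f(z) = \mathrm{res}_z z^{-n-1} f(z)$, apply the change-of-variables rule with $z = g(y)$ (using $f(g(y)) = y$), and simplify. This gives
$$[z^n] f(z) = \mathrm{res}_y \frac{\phi(y)^{n+1}}{y^n}\, g'(y) = \mathrm{res}_y \frac{\phi(y)^n}{y^n} - \mathrm{res}_y \frac{\phi(y)^{n-1} \phi'(y)}{y^{n-1}}.$$
I would then exploit the identity $\phi(y)^{n-1} \phi'(y) = \tfrac{1}{n}(\phi(y)^n)'$ and apply integration by parts to the second term, rewriting it as $\tfrac{n-1}{n}\,\mathrm{res}_y \phi(y)^n/y^n$. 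Combining the two residues yields
$$[z^n] f(z) = \frac{1}{n}\,\mathrm{res}_y \frac{\phi(y)^n}{y^n} = \frac{1}{n}[y^{n-1}]\phi(y)^n,$$
which is the claim.

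The main obstacle is really logistical: verifying the change-of-variables formula for formal residues, a standard but slightly fiddly lemma that requires its own short proof (one reduces to monomials $G(z) = z^k$ and separates the case $k = -1$ from the rest via $z^k = \tfrac{1}{k+1}(z^{k+1})'$). Once that is in hand, the remainder is a clever regrouping plus one integration by parts. A purely combinatorial alternative would invoke the cycle lemma (Raney's theorem) for a bijective proof, but this seems heavier for a statement whose use in the present paper is purely formal.
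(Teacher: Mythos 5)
The paper does not actually prove this statement: it is quoted as a classical result and the reader is referred to the books of Flajolet--Sedgewick and Stanley, so your argument is not competing with a proof in the text. That said, your residue-calculus proof is correct and complete as an outline, and it is essentially the standard proof given in those references. The pieces all check out: $f$ has valuation one with $f'(0)=\phi(0)\neq 0$, so $g(y)=y/\phi(y)$ (which also has valuation one because $\phi(0)\neq 0$) is its two-sided compositional inverse, the substitution $z=g(y)$ in $[z^n]f(z)=\mathrm{res}_z\, z^{-n-1}f(z)$ gives
$$
[z^n]f(z)=\mathrm{res}_y\,\frac{\phi(y)^{n+1}}{y^{n}}\,g'(y)
=\mathrm{res}_y\,\frac{\phi(y)^{n}}{y^{n}}-\mathrm{res}_y\,\frac{\phi(y)^{n-1}\phi'(y)}{y^{n-1}},
$$
and integration by parts applied to the second residue (via $\phi^{n-1}\phi'=\tfrac1n(\phi^n)'$) turns it into $\tfrac{n-1}{n}\,\mathrm{res}_y\,\phi(y)^n/y^n$, so the two terms combine to $\tfrac1n[y^{n-1}]\phi(y)^n$ as required. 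You correctly identify that the only genuinely delicate step is the change-of-variables lemma for formal residues, and your sketch of its proof (reduce to monomials, treat $k\neq -1$ via exact derivatives and $k=-1$ by direct computation of $h^{-1}h'$) is the right one; the one hypothesis worth making explicit there is that $h$ has valuation exactly one, so that $G(h(u))$ is a well-defined Laurent series and the residue is preserved with multiplicity one rather than multiplied by the valuation of $h$. For the application in this paper, where the theorem is invoked with $\phi(X)=(1-X^{d-1})^{-1}$, nothing beyond this is needed.
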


\begin{proposition}\label{darytree}
Let~$d\ge 2$,~$n\ge 1$,~$T_d$ be a~$d$-ary tree with root~$x_0$. Then
$|\calf_{T_d}^{1}(x_0)|=0$ and
$$
|\calf_{T_d}^{n}(x_0)|=
\begin{cases}
\frac{1}{n}\binom{\frac{d}{d-1}(n-1)}{\frac{1}{d-1}(n-1)},& \text{ if }n\equiv 1 \pmod{d-1};\\
0, & \text{ otherwise},
\end{cases}
$$
when~$n\ge 2$.
\end{proposition}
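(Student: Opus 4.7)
My plan is to combine Proposition \ref{catalan} with the Lagrange Inversion Theorem stated just before the proposition. By Proposition \ref{catalan}, it suffices to count the finite subtrees $B \subset T_d$ containing $x_0$ with $|\partial_v^{ext}(B)| = n$, i.e., the elements of $\calb_{T_d}^n(x_0)$.

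The first step is to convert the boundary-size constraint into a vertex-count constraint. If $|B| = k$, then each of the $k$ vertices of $B$ has exactly $d$ children in $T_d$, giving $kd$ potential boundary vertices; but exactly $k - 1$ of these children (the non-root vertices of $B$) already lie in $B$. Hence $|\partial_v^{ext}(B)| = kd - (k - 1) = k(d - 1) + 1$. Setting this equal to $n$ forces $n \equiv 1 \pmod{d - 1}$ and $k = (n - 1)/(d - 1)$. In particular, $n = 1$ would require $k = 0$, which contradicts $x_0 \in B$, so $|\calf_{T_d}^{1}(x_0)| = 0$, and for other $n \not\equiv 1 \pmod{d-1}$ the set is empty.

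The second step is to count the number $t_k$ of finite rooted subtrees of $T_d$ with $k$ vertices containing the root. I would exploit the self-similarity of $T_d$: such a subtree is the root together with, for each of its $d$ children, an independent choice of either omitting the child's branch entirely or including a finite rooted subtree inside the child's copy of $T_d$. Letting $T(z) = \sum_{k \ge 1} t_k z^k$, this decomposition yields the functional equation $T(z) = z(1 + T(z))^d$, which has the form $T(z) = z\phi(T(z))$ with $\phi(w) = (1 + w)^d$ and $\phi(0) = 1 \ne 0$, exactly the hypothesis needed for Lagrange Inversion.

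Finally, I would apply the Lagrange Inversion Theorem to obtain
$$t_k = [z^k]T(z) = \frac{1}{k}[z^{k-1}](1 + z)^{dk} = \frac{1}{k}\binom{dk}{k - 1}.$$
A short binomial manipulation, using $\binom{dk}{k-1} = \frac{k}{k(d-1) + 1}\binom{dk}{k}$, rewrites $t_k$ as $\frac{1}{n}\binom{dk}{k}$ with $k = (n-1)/(d-1)$ and $dk = d(n-1)/(d-1)$, matching the stated expression. I expect no significant obstacle beyond care in setting up the functional equation correctly (remembering the ``$1 +$'' that accounts for omitted branches at each child) and verifying that the binomial rearrangement produces the form displayed in the proposition.
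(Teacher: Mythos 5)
Your proof is correct, but it parameterizes the count differently from the paper. The paper works directly with contours: it sets $f(X)=\sum a_nX^n$ with $a_n=|\calf_{T_d}^n(x_0)|$, derives $f(X)=(X+f(X))^d$ by deciding for each root edge whether it enters the contour, substitutes $h=X+f$ to reach the Lagrange form $h=X(1-h^{d-1})^{-1}$, and reads off the congruence condition from the fact that $\phi(X)^n=(1-X^{d-1})^{-n}$ has nonzero coefficients only in degrees divisible by $d-1$. You instead pass through the bijection of Proposition \ref{catalan}, count subtrees by their number of \emph{internal vertices} $k$, and use the linear relation $n=k(d-1)+1$ to translate back; your functional equation $T(z)=z(1+T(z))^d$ is already in Lagrange form with $\phi(w)=(1+w)^d$, so no substitution is needed, and the identity $\frac{1}{k}\binom{dk}{k-1}=\frac{1}{n}\binom{dk}{k}$ finishes the job (your rearrangement is right, since $dk-(k-1)=k(d-1)+1=n$). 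What your route buys: the divisibility condition $n\equiv 1\pmod{d-1}$ and the vanishing for $n=1$ become transparent combinatorial facts (the boundary size of a $k$-vertex subtree is forced to be $k(d-1)+1$) rather than consequences of a power-series expansion, and the Lagrange computation is slightly cleaner. What the paper's route buys: it avoids invoking Proposition \ref{catalan} and the change of variable, keeping the generating function in the quantity one actually wants ($n$, the contour size), which is also the form reused later in Proposition \ref{regular_tree} and in the rooted-contour propositions. Both arguments are complete and rest on the same engine, the Lagrange Inversion Theorem applied to the recursive self-similarity of $T_d$.
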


\begin{proof}

For each edge with endvertex~$x_0$, we can either include this edge
in the contour or not. If we do not include it, we carry the root~$x_0$
to the other endvertex of this edge and apply again the same
procedure. Consider~$f(X)=\sum_{n\ge 1}a_nX^n$ the generating function
in which the coefficients are~$a_n=|\calf_{T_d}^n(x_0)|$ for all $n\ge
1$. Then we have the following equation $f(X)=(X+f(X))^d$.
Consider~$h(X)=X+f(X)$, we have~$h(X)=X+h(X)^d$, so 
$h(X)=X(1-h(X)^{d-1})^{-1}$.  Applying Lagrange's Theorem  
with~$\phi(X)=(1-X^{d-1})^{-1}$ we obtain 
$[X^n]h(X)=\frac{1}{n}[X^{n-1}]\phi(X)^n.$
Now,
$$
\phi(X)^n=(1-X^{d-1})^{-n}=\sum_{k\ge 0}\binom{n+k-1}{k}X^{(d-1)k}.
$$
Thus, if~$n-1=(d-1)k$ for some~$k$, we have
$$
[X^n]h(X)=\frac{1}{n}\binom{n+k-1}{k}=\frac{1}{n}\binom{\frac{d}{d-1}(n-1)}{\frac{1}{d-1}(n-1)}.
$$
\end{proof}

\textit{Remark:}
There is a geometric interpretation for the
equation~$h(X)=X+h(X)^d$. Let~$T_d$ be a~$d$-ary tree with root~$x_0$. Add
an edge~$e$ for which~$x_0$ will be a leaf, and it will be an endpoint
of $e$. Now we can either include the edge~$e$ in the contour or not. If we
do not include it, 
we carry the root~$x_0$ to the other endvertex of this edge
and apply the same procedure again.

\begin{corollary}
Let~$d\ge 2$, $T_d$ be a~$d$-ary tree with root~$x_0$, 
and let~$n\ge 1$ be such that $n\equiv 1 \pmod{d-1}$, and $k=(n-1)/(d-1)$. We have
$$
\frac{1}{n}d^k \le |\calf_{T_d}^{n}(x_0)|\le \frac{1}{n}(ed)^k.
$$
\end{corollary}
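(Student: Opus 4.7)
The plan is to reduce the corollary to standard bounds on binomial coefficients. By Proposition \ref{darytree}, under the hypothesis $n \equiv 1 \pmod{d-1}$ with $k = (n-1)/(d-1)$ we have
$$|\calf_{T_d}^{n}(x_0)| \;=\; \frac{1}{n}\binom{\tfrac{d}{d-1}(n-1)}{\tfrac{1}{d-1}(n-1)} \;=\; \frac{1}{n}\binom{dk}{k},$$
since $\tfrac{d}{d-1}(n-1) = d\cdot k$ and $\tfrac{1}{d-1}(n-1) = k$. So the corollary reduces entirely to establishing
$$d^k \;\le\; \binom{dk}{k} \;\le\; (ed)^k.$$

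For the lower bound I would use the elementary inequality $\binom{N}{k} \ge (N/k)^k$, which follows from $\binom{N}{k} = \prod_{j=0}^{k-1}\frac{N-j}{k-j}$ and the fact that each factor $\frac{N-j}{k-j}$ is at least $N/k$ when $N\ge k$ (the map $j \mapsto \frac{N-j}{k-j}$ is nondecreasing in $j$ on $[0,k-1]$). Applied with $N = dk$ this gives $\binom{dk}{k} \ge d^k$.

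For the upper bound I would invoke the companion inequality $\binom{N}{k} \le (eN/k)^k$, which is standard: from $k! \ge (k/e)^k$ (a weak form of Stirling) we get $\binom{N}{k} \le N^k/k! \le (eN/k)^k$. Taking $N=dk$ gives $\binom{dk}{k} \le (ed)^k$.

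Combining these two bounds and dividing through by $n$ yields the claim. There is essentially no obstacle here beyond quoting the two standard estimates for central-like binomial coefficients; the only thing worth being careful about is correctly rewriting the binomial in Proposition \ref{darytree} in terms of $k$ before applying them.
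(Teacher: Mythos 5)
Your proposal is correct and follows essentially the same route as the paper: the paper likewise derives the corollary directly from Proposition \ref{darytree} together with the standard bounds $\left(\frac{N}{k}\right)^k \le \binom{N}{k}\le \left(\frac{eN}{k}\right)^k$, applied with $N=dk$. The only difference is that you also sketch proofs of these two standard inequalities, which the paper simply quotes.
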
 

\begin{proof}
This is consequence of 
the following inequality. For all integers $0\le k\le n$,
$$
\left(\frac{n}{k}\right)^k \le \binom{n}{k}\le \left(\frac{en}{k}\right)^k.
$$
\end{proof}

\begin{proposition}\label{regular_tree}
Let~$d\ge 2$,~$n\ge 1$,~$T$ be a~$(d+1)$-regular tree, and $x$ be a vertex of $T$. Then
$$
|\calf_T^n(x)|=a_{n-1}+\sum_{k=1}^{n-1}a_ka_{n-k},
$$
where~$a_n=|\calf_{T_d}^n(x)|$.
\end{proposition}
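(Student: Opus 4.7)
The plan is to mimic the generating function argument used in the proof of Proposition \ref{darytree}, exploiting the fact that in the $(d+1)$-regular tree $T$ the vertex $x$ has exactly $d+1$ neighbors $y_0,\ldots,y_d$, and that deleting the edge $\{x,y_i\}$ produces a subtree rooted at $y_i$ that is isomorphic to the $d$-ary tree $T_d$ (since $y_i$ then has $d$ remaining neighbors, each itself having $d$ children, and so on).

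With $A(X) = \sum_{n\ge 1} a_n X^n$, I would recall from the proof of Proposition \ref{darytree} the functional equation $A(X)=(X+A(X))^d$: at the root of $T_d$ each of the $d$ incident edges is either cut (contributing a factor $X$) or left uncut, in which case the contour must cut off the $T_d$ subtree rooted at its other endpoint (contributing $A(X)$). Exactly the same dichotomy applies at $x$ in $T$, but now to all $d+1$ incident edges, so by independence across the branches
\[
G(X):=\sum_{n\ge 1}|\calf_T^n(x)|\,X^n = (X+A(X))^{d+1}.
\]
A single application of the functional equation then gives
\[
G(X) = (X+A(X))\cdot (X+A(X))^d = (X+A(X))\,A(X) = X\,A(X) + A(X)^2,
\]
and extracting the coefficient of $X^n$ (with the convention $a_0=0$) yields $a_{n-1}+\sum_{k=1}^{n-1}a_k a_{n-k}$, as claimed.

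There is no serious obstacle; the only point that deserves care is the rigorous justification of the combinatorial decomposition. I would handle this via Proposition \ref{catalan}, rephrasing the count as enumerating finite connected subtrees $B\ni x$ by the size of their external vertex-boundary, and decomposing $B=\{x\}\cup\bigcup_i B_i$ according to which of the $d+1$ branches $B$ enters. A branch with $y_i\notin B$ contributes one boundary vertex (namely $y_i$), producing a factor $X$; a branch with $y_i\in B$ corresponds to a finite subtree of the $T_d$ rooted at $y_i$ with external boundary of size $k\ge 1$ inside that subtree, producing the factor $\sum_{k\ge 1}a_k X^k = A(X)$.
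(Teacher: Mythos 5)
Your proposal is correct and follows essentially the same route as the paper: both write the generating function for contours around $x$ in the $(d+1)$-regular tree as $(X+A(X))^{d+1}$, reduce it via the functional equation $A(X)=(X+A(X))^d$ to $XA(X)+A(X)^2$, and extract coefficients. Your additional remarks on justifying the branch decomposition via Proposition \ref{catalan} only make explicit what the paper leaves implicit.
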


\begin{proof}
Let~$g(X)=\sum_{n\ge 1}b_nX^n$ be the generating function with
coefficients~$b_n=|\calf_{T}^n(x)|$, and~$f(X)=\sum_{n\ge 1}a_nX^n$
be the generating function with coefficients~$a_n=|\calf_{T_d}^n(x)|$.
Note that~$g(X)=(X+f(X))^{d+1}=Xf(X)+f(X)^2$. The proof is a direct
consequence of the previous proposition.
\end{proof}

A natural question is to compare the number of contours between
different infinite trees. We next show that the
binary tree is extremal in the class of all locally finite trees
in which every vertex has at least two children.

\begin{theorem}\label{twochildren}
Let $T$ be a locally finite and infinite rooted tree. Let $x$ be the
root of $T$ and suppose that all vertices in \(T\) have at least
two children. Then, for all $n\ge 1$, we have $|\calf_T^n(x)|\le
|\calf_{T_2}^n(x)|.$
\end{theorem}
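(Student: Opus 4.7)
The plan is to adapt the generating-function setup of Proposition~\ref{darytree} to arbitrary trees satisfying the ``at least two children'' hypothesis, and to bound the resulting coefficients against those of the Catalan series by a strong induction on~$n$. By Proposition~\ref{catalan} it suffices to count finite subtrees $B\ni x$ of $T$ with $|\partial^{ext}_v(B)|=n$. Let the root $x$ have children $x_1,\dots,x_d$ with $d\ge 2$, and let $T_i$ be the subtree hanging from $x_i$; each $T_i$ again satisfies the ``at least two children'' hypothesis. Setting $F_T(z):=\sum_{n\ge 2}|\calf_T^n(x)|z^n$ and classifying $B$ according to which $x_i$ it contains yields
\[
F_T(z)=\prod_{i=1}^{d}\bigl(z+F_{T_i}(z)\bigr),
\]
where the factor $z$ accounts for $x_i\notin B$ (so $x_i\in\partial^{ext}_v(B)$) and $F_{T_i}(z)$ for $x_i\in B$.

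Writing $G_T(z):=z+F_T(z)$, the recursion becomes $G_T(z)=z+\prod_{i=1}^d G_{T_i}(z)$. For $T=T_2$ one obtains $G_{T_2}(z)=z+G_{T_2}(z)^2$, so $h(z):=G_{T_2}(z)$ is the Catalan generating function with $[z^n]h=C_{n-1}=\frac{1}{n}\binom{2n-2}{n-1}$. I would then prove by strong induction on $n\ge 1$ the stronger comparison $[z^n]G_T(z)\le [z^n]h(z)$ for every tree~$T$ in this class, which immediately implies the theorem since $|\calf_T^n(x)|=[z^n]F_T(z)$ and similarly for $T_2$. The base case $n=1$ is immediate, both sides equalling~$1$. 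For $n\ge 2$, using $[z^0]G_{T_i}=0$,
\[
[z^n]G_T(z)=\sum_{\substack{n_1,\dots,n_d\ge 1\\ n_1+\cdots+n_d=n}}\prod_{i=1}^d [z^{n_i}]G_{T_i}(z).
\]
Since $d\ge 2$, each $n_i\le n-1$, so the induction hypothesis applies to every factor and gives $[z^n]G_T(z)\le[z^n]h(z)^d$.

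The remaining ingredient, which I expect to be the main (though short) technical point, is the inequality $[z^n]h^d\le[z^n]h$ for all $d\ge 2$ and $n\ge 1$. This rests on the functional equation $h=z+h^2$: rewriting it as $h^2=h-z$ and multiplying by $h^{d-2}$ for $d\ge 3$ yields $h^d=h^{d-1}-zh^{d-2}$, so $h^{d-1}-h^d=zh^{d-2}$ has nonnegative coefficients. Iterating, $h^d\le h^{d-1}\le\cdots\le h^2=h-z\le h$ coefficient-wise, which closes the induction and gives $|\calf_T^n(x)|=[z^n]F_T(z)\le[z^n]F_{T_2}(z)=|\calf_{T_2}^n(x)|$, as required.
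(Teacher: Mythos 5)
Your proof is correct, but it takes a genuinely different route from the paper's. The paper proves Theorem~\ref{twochildren} by a minor construction: each vertex of $T$ with $s>2$ children is expanded into a path of $s-1$ vertices to produce a binary tree $T'$ of which $T$ is a minor, and then a size-preserving injection $f:\calf_T^n(x)\to\calf_{T'}^n(x')$ is exhibited, with most of the effort spent verifying that $f(C)$ is again a contour (both the separation property and minimality, via lifting infinite paths through the contracted green edges). You instead extend the generating-function recursion $f=(X+f)^d$ of Proposition~\ref{darytree} to arbitrary trees in the class, getting $G_T=z+\prod_i G_{T_i}$, and close a strong induction on $n$ --- quantified over all trees in the class --- with the coefficientwise inequality $[z^n]h^d\le [z^n]h$, which drops out of the functional equation $h=z+h^2$ since $h^{d-1}-h^d=zh^{d-2}$ has nonnegative coefficients. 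Your argument is shorter and more in the spirit of Section~2 of the paper, and as a bonus the inductive bound delivers finiteness of $|\calf_T^n(x)|$, which is not obvious a priori (until that bound is in hand the convolution identity should be read as an identity of possibly infinite cardinals). Two points worth making explicit in a final write-up: each $T_i$ is again infinite, locally finite, and has every vertex with at least two children, so the universally quantified induction hypothesis really applies to it; and the product decomposition is legitimate because, via Proposition~\ref{catalan}, the traces $B\cap T_i$ can be chosen independently once $x\in B$. The paper's approach, while heavier, produces an explicit embedding of contour families that is reused conceptually in Theorem~\ref{infinitecontour}.
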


\begin{proof}

We will construct a binary labeled tree $T'$ such that $T$ is a minor
of $T'$ as follows. Starting from $x$ we process the vertices of $T$
according to a Breadth-First-Search order, that is, we start from the
root $x$, then process its neighbors, followed by their neighbors
and so on. When we process a vertex $y$ of $T$ that has $s > 2$
children, say $z_1,z_2,\ldots,z_s$, we replace $y$ by $s-1$ vertices
$y_1,y_2,\ldots,y_{s-1}$. For each $i$, the children of $y_i$ are
$y_{i+1}$ and $z_{i}$, and the children of $y_{s-1}$ are $z_{s-1}$
and $z_s$. When a vertex $y$ of $T$ has $2$ children, we keep the
vertex $y$. Clearly $T'$ is a binary tree. We call $x'$ the root of
$T'$. We will show that there exists an injective map $f$ which takes
each contour $C$ in $\mathcal{F}^n_T(x)$ and produces a contour $f(C)$
in $\mathcal{F}^n_{T'}(x')$. In fact, for each edge of the form $yz_i$ in
$C$, we associate the edge $y_iz_i$ in $T'$ (for $yz_s$ take $y_{s-1}z_s$)
and for $y$ with $s=2$ children we keep the edge $yz_i$. The collection of
edges produced by this procedure is defined as $f(C)$. To simplify the
argument let us call the new edges green edges, see the picture below.

We should prove that $f: \calf_T^n(x) \rightarrow
\calf_{T'}^n(x')$, in other words, that $f(C)$ belongs to
$\calf_{T'}^n(x)$. To see that $f(C)$ is a contour observe that, by
construction, there are no green edges in $f(C)$. Suppose by contradiction
that $T'\backslash f(C)$ has no finite connected component containing
the root $x'$, then there exists an infinite path $\g'$ in $T'$ starting
at the root $x'$ of $T'$. When we contract all the green edges in  $T'$,
in particular in $\g'$, we obtain the original tree $T$ and a path $\g$
in $T$ starting in the root $x$. Since there are no green edges in the path
$\g'$, we have now an infinite path $\g$ in $T$ starting at the root $x$
with $E(\g) \cap C = \emptyset$, a contradiction. To see that $f(C)$ has
the minimality property suppose that there exists an edge $e' \in f(C)$
such that $E(T') \backslash (f(C) \backslash \{e'\})$ still has a finite
component containing the root $x'$. When we contract all the green edges
and add the corresponding edge $e \in C$ (the edge associated to $e'$ by
$f$), since $C$ is a contour, there exists an infinite path $\a$ starting
at the root $x$ in $T$ such that $e \in E(\alpha)$. We will construct,
using the path $\alpha$, an infinite path $\alpha'$ in $T'$ starting
at $x'$ such that $e' \in E(\alpha')$, to get a contradiction. 
Indeed, consider the
process to construct the tree $T'$ in the vertices of $\alpha$. Starting
at the root $x$, for each edge $zy \in E(\alpha)$, where $z$ is the 
father of $y$,
after processing $z$ there exists $1\leq j \leq s-1$ such that $z_j y$
is an edge of $T'$. Add $z_j y$ to $E(\alpha')$. If $j=1$ we add the
edge $z_1 y$ to $\alpha$, if $j> 1$ we add the finite path starting
in $z_1$ and ending in $z_j$,  (which consists of green 
edges: $z_1z_2, z_2z_3,
..., z_{j-1}z_j$) and the edge $z_jy$ to $\alpha'$. Since the path $\a$
is infinite and $e \in E(\alpha)$ we construct an infinite path $\a'$,
starting in $x'$ such that $e'$ belongs to $\a'$. This shows that
$f(C)$ is indeed a contour.

It is also easy to check
that $f$ is injective and that $|C|=n$ implies $|f(C)|=n$.

\begin{center}
\begin{tikzpicture}[line width=1pt,
level 1/.style={sibling distance=1.5cm},
level 2/.style={sibling distance=0.5cm}
scale=0.50] 
\node {\(x\)}
    child{ node {\(y\)}
    }
    child{ node {\(z\)}
    }
    child{ node {\(w\)}
    }
;
\end{tikzpicture}
\begin{tikzpicture}[node distance=5cm,auto, scale=0.50]
\node (s) at (0,0) {};
\node (a) at (0,2) {};
\node (b) at (2,2) {};
\draw[thick,->, bend left=45](a) edge (b);
\end{tikzpicture}
\begin{tikzpicture}[line width=1pt,
level 1/.style={sibling distance=4cm},
level 2/.style={sibling distance=2cm},
level 3/.style={sibling distance=1cm},
scale=0.8pt] 
\node (Root) {\(x_1\)}
    child{node {\(y\)}             
    }
    child{node {\(x_2\)} edge from parent [green]
		            child{node[black] {\(z\)} edge from parent [black]
            }
            child{node[black] {\(w\)}  edge from parent [black]
            }
		}
; 
\end{tikzpicture}
\end{center}
\begin{center}
\textit{Example first iteration}
\end{center}

\begin{center}
\begin{tikzpicture}[line width=1pt,
level 1/.style={sibling distance=1.5cm},
level 2/.style={sibling distance=0.5cm},
scale=0.8pt] 
\node  {\(x\)}
    child{ node {\(y\)}
            child{ node {\(u_1\)}
            }
            child{ node {\(u_2\)}
            }
						child{ node {\(u_3\)}
            }  
    }
    child{ node {\(z\)}
            child{
            }
            child{ edge from parent [black]
            }
						child{ edge from parent [black]
            }  
    }
    child{ node {\(w\)}
            child{
            }
            child{ edge from parent [black]
            }
						child{ edge from parent [black]
            }  
    }
;
\end{tikzpicture}
\begin{tikzpicture}[node distance=5cm,auto,scale=0.50]
\node (s) at (0,0) {};
\node (a) at (0,3) {};
\node (b) at (2,3) {};
\draw[thick,->, bend left=45](a) edge (b);
\end{tikzpicture}
\begin{tikzpicture}[line width=1pt,
level 1/.style={sibling distance=4cm},
level 2/.style={sibling distance=2cm},
level 3/.style={sibling distance=1cm},
scale=0.8pt] 
\node (Root) {\(x_1\)}
    child{node {\(y_1\)}
            child{ node {\(u_1\)}
            }
            child{ node {\(y_2\)} edge from parent [green]
							child { node[black] {\(u_2\)} edge from parent [black]}
							child{node[black] {\(u_3\)} edge from parent [black]}
            }                            
    }
    child{node {\(x_2\)} edge from parent [green]
            child{node[black] {\(z\)} edge from parent [black]
            }
            child{node[black] {\(w\)}  edge from parent [black]
            }
		}
; 
\end{tikzpicture}
\end{center}
\begin{center}
\textit{Part of the second iteration }
\end{center}

\end{proof}

By the theorem above we have an estimate for trees in which each vertex has
at least~$r$ children, where $r\ge 2$. However, we have a better estimate
for these trees. To prove this we use the inequality below, a classical
theorem in extremal combinatorics proved in \cite{Bollo}, see also
\cite{Alon85} and its references for several variants and
extensions.

\begin{theorem*}[Bollob\'{a}s, 1965]\label{bollobas}
Let $\{(A_i,B_i):i\in I\}$ be a finite collection of pairs of finite sets 
such that $A_i\cap B_j=\emptyset$ if and only if $i=j$. Then
$$
\sum_{i\in I}\binom{|A_i|+|B_i|}{|A_i|}^{-1}\le 1.
$$
In particular, if for all $i\in I$ we have $|A_i|\le a$ 
and $|B_i|\le b$, then
$$
|I|\le \binom{a+b}{a}.
$$
\end{theorem*}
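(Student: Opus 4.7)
The plan is to deploy the classical probabilistic (random-linear-order) argument, which gives the inequality in essentially one line once the setup is in place. Let $U=\bigcup_{i\in I}(A_i\cup B_i)$ and choose a uniformly random linear order $\prec$ on $U$. For each $i\in I$ define the event
$$E_i=\{\text{every element of } A_i \text{ precedes every element of } B_i \text{ under } \prec\}.$$
Because $A_i\cap B_i=\emptyset$ (the ``only if'' direction applied with $j=i$), the $|A_i|+|B_i|$ elements are distinct and their relative order is uniform on the $(|A_i|+|B_i|)!$ permutations, of which exactly $|A_i|!\,|B_i|!$ realize $E_i$; hence $\Pr[E_i]=\binom{|A_i|+|B_i|}{|A_i|}^{-1}$.

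The heart of the argument is to show that the events $\{E_i\}_{i\in I}$ are pairwise disjoint. Suppose $i\neq j$ and both $E_i$ and $E_j$ occur. By the hypothesis, $A_i\cap B_j$ and $A_j\cap B_i$ are both nonempty, so we can select $x\in A_i\cap B_j$ and $y\in A_j\cap B_i$. Event $E_i$ forces $x\prec y$ (since $x\in A_i$ and $y\in B_i$), while event $E_j$ forces $y\prec x$ (since $y\in A_j$ and $x\in B_j$), a contradiction. Consequently
$$\sum_{i\in I}\binom{|A_i|+|B_i|}{|A_i|}^{-1}=\sum_{i\in I}\Pr[E_i]\le 1,$$
which is the first inequality.

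For the ``in particular'' statement, I would use the elementary fact that $\binom{m+n}{m}$ is non-decreasing in each of $m$ and $n$. Therefore $|A_i|\le a$ and $|B_i|\le b$ imply $\binom{|A_i|+|B_i|}{|A_i|}\le\binom{a+b}{a}$, and substituting this bound into the displayed sum yields $|I|\cdot\binom{a+b}{a}^{-1}\le 1$, i.e.\ $|I|\le\binom{a+b}{a}$.

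The only genuinely non-routine step is recognizing that the biconditional intersection hypothesis is exactly what is needed to render the linear-order events pairwise disjoint; once that observation is made, the rest reduces to a one-line computation of $\Pr[E_i]$ and an application of the union bound. Alternative proofs (a Kraft-style inequality using codewords built from the positions of the $A_i$, or a direct induction on $|U|$) are available, but all are essentially repackagings of this probabilistic core.
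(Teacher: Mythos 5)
Your proof is correct. Note that the paper itself does not prove this statement: it is quoted as a classical result with a citation to Bollob\'as (1965), so there is no in-paper argument to compare against. Your random-linear-order proof is a complete and standard derivation: the computation $\Pr[E_i]=\binom{|A_i|+|B_i|}{|A_i|}^{-1}$ is valid because $A_i\cap B_i=\emptyset$, and the disjointness of the events $E_i$ follows exactly as you say from choosing $x\in A_i\cap B_j$ and $y\in A_j\cap B_i$ (one might add the one-word observation that $x\neq y$, since $x=y$ would put this element in $A_i\cap B_i=\emptyset$, so the two order constraints genuinely conflict). The deduction of the ``in particular'' clause from the monotonicity of $\binom{m+n}{m}$ in each argument is also fine. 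For context, Bollob\'as's original proof was by induction on $\lvert\bigcup_i(A_i\cup B_i)\rvert$, and the cited paper of Alon uses exterior-algebra/linear-algebra methods for uniform variants; your permutation argument is the shortest route to exactly the weighted inequality stated here.
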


\begin{theorem}\label{alon}
Let~$T$ be a locally finite infinite rooted tree with root~$x$. Suppose 
that all vertices of $T$ have at least $r$ children, $r\ge 2$. Then, 
for all $n\ge 1$,
$$
|\calf_T^n(x)|\le \binom{n+ \lfloor \frac{n-r}{r-1}\rfloor}{\lfloor  \frac{n-r}{r-1}\rfloor}.
$$

where $\lfloor x \rfloor =\max \{n \in \mathbb{Z} : n \leq x \}.$
\end{theorem}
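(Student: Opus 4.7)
The plan is to apply Bollob\'as's set-pair inequality to pairs $(A_i,B_i)$ in one-to-one correspondence with the contours in $\calf_T^n(x)$. By Proposition \ref{catalan}, each such contour is identified with its associated finite subtree $G_i \subset T$ containing $x$; write $m_i = |G_i|$ and set
$$
A_i := \partial_v^{ext} G_i, \qquad B_i := G_i \setminus \{x\},
$$
so that $|A_i| = n$ and $|B_i| = m_i - 1$.

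I would first bound $|B_i|$. For each $v \in G_i$, sorting its children in $T$ according to whether they lie inside $G_i$ or in $\partial_v^{ext} G_i$ gives
$$
\sum_{v \in G_i} c(v) \;=\; (m_i - 1) + n,
$$
where $c(v) \ge r$ is the number of children of $v$ in $T$. Hence $r m_i \le m_i - 1 + n$, so
$$
|B_i| \;=\; m_i - 1 \;\le\; \frac{n-r}{r-1},
$$
and since $m_i - 1 \in \mathbb{Z}$ this yields $|B_i| \le k := \lfloor (n-r)/(r-1) \rfloor$.

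The heart of the argument, and the step I expect to be the main obstacle, is verifying the Bollob\'as condition $A_i \cap B_j = \emptyset \Leftrightarrow i = j$. The forward implication is immediate from $\partial_v^{ext} G_i \cap G_i = \emptyset$. The worry for the reverse implication is that one $G_j$ properly contains another $G_i$, in which case $B_i \subset G_j$ would be disjoint from $A_j$. I claim this cannot happen among contours of the same size $n$: if $G_i \subsetneq G_j$, one can pass from $G_i$ to $G_j$ by repeatedly adjoining a single vertex $v$ adjacent to the current subtree $H$ (found as the first vertex along the unique $x$-to-$u$ path in $G_j$ that leaves $H$, for some $u \in G_j \setminus H$). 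Each such step deletes $v$ from the external boundary and inserts in its place the $c(v) \ge r$ children of $v$ in $T$, a net gain of at least $r-1 \ge 1$. Thus $|\partial_v^{ext} G_i| < |\partial_v^{ext} G_j|$, so subtrees corresponding to distinct contours of size $n$ are incomparable. Given this, for $i \ne j$ pick any $v \in G_j \setminus G_i$; the unique $x$-to-$v$ path in $T$ lies inside the subtree $G_j$, starts in $G_i$, and ends outside $G_i$, so it first exits $G_i$ at a vertex $w \ne x$ lying in $\partial_v^{ext} G_i \cap G_j$, i.e., $w \in A_i \cap B_j$.

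Applying the quantitative form of Bollob\'as's theorem stated above with $|A_i| \le n$ and $|B_i| \le k$ yields
$$
|\calf_T^n(x)| \;\le\; \binom{n+k}{n} \;=\; \binom{n+k}{k},
$$
which is exactly the claimed bound.
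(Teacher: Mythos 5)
Your proposal is correct and follows essentially the same route as the paper: bound the size of the interior by a child-counting argument, show that the interiors of distinct contours of the same size are incomparable (so the Bollob\'as cross-intersection condition holds), and apply the set-pair inequality with $|A_i|=n$ and $|B_i|\le\lfloor (n-r)/(r-1)\rfloor$. The only difference is cosmetic: you phrase the pairs as vertex sets $(\partial_v^{ext}G_i,\,V(G_i)\setminus\{x\})$ while the paper uses the edge sets $(C, E(I_C))$, and these are in natural bijection on a tree.
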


\begin{proof}
Let $C$ be a contour of size $n$ in $T$ and let $I_C$ be the finite
connected component when we remove $C$ from $T$. We will find an upper
bound for the number of edges $|E(I_C)|$ in $I_C$. Let $B$ be the induced
subgraph of $T$ on the union of $I_C$ and $C$. Note that $B=(V,E)$ is
a rooted finite subtree of $T$ with $n$ leaves, and each vertex of $B$
that is not a leaf has at least $r$ children. Let $t$ be the number of
vertices of $B$ and consider the number $k=t-n$. Note  that $k$ is the
number of vertices in $I_C$. Using the fact that all vertices of $T$
have at least $r$ children, we have
$$
2(t-1)=\sum_{v\in V}\d(v)\ge (k-1)(r+1)+r+n.
$$
Thus, $k\le (n-1)/(r-1)$.

Since $I_C$ is a tree, the number of edges in $I_C$ is $|E(I_C)|=k-1\leq (n-r)/(r-1)$.

To finish the proof we need the following:

\textit{Fact:} If $C_1$ and $C_2$ are two distinct contours of a
vertex $x$ in $T$,
each
of size $n$, and if $I_{C_1}$ is the finite connected component when we
remove $C_1$ from $T$, then $E(I_{C_1}) \cap C_2\neq \emptyset$.

\textit{Proof of fact:} Suppose, by contradiction, that there exist
two contours $C_1$ and $C_2$ as above in $T$, each of
size $n$, such that $E(I_{C_1})
\cap C_2= \emptyset$. Let $I_{C_2}$ be the finite connected component
when we remove $C_2$ from $T$. Then $I_{C_1}$is a subgraph of $I_{C_2}$ and
$I_{C_1}\neq I_{C_2}$. Since $I_{C_1}$ and $I_{C_2}$ are subtrees of $T$,
and as all vertices in $T$ have at least $r$ children, we have
$|\partial_e(I_{C_1})|<|\partial_e(I_{C_2})|=n$, a contradiction. This
proves the fact.

Finally, let us prove the desired result. Let $(C,E(I_C))$ be a pair of
a contour $C$ of size $n$, where $I_C$ is the finite connected component
when we remove $C$ from $T$. We have $|C|=n$ and $|E(I_C)|\le \lfloor
(n-r)/(r-1)\rfloor$. The set $\{(C,E(I_C)):C\in \mathcal{F}^n_T(x)\}$
is finite, and $C_1\cap E(I_{C_2})=\emptyset$ if and only if $C_1=C_2$.
By the theorem above,
$$
|\calf_T^n(x)|\le \binom{n+ \lfloor 
\frac{n-r}{r-1}\rfloor}{\lfloor \frac{n-r}{r-1}\rfloor},
$$
concluding the result.
\end{proof}

It is sometimes desirable
to consider contours whose edges cover a fixed
vertex, see \cite{Rozi1}. We obtain some bounds for this case as
well.

\begin{definition}
Let $T$ be an infinite tree with root $x_0$. A \emph{rooted
contour} is a contour $C$ such that there exists an edge
$l\in C$ incident with the
root $x_0$.
\end{definition}

We denote by $\calf_{r,T}^n(x_0)$ the set of rooted contours $C$
on $T$ of size $n$. We can calculate exactly $|\calf_{r,T}^n(x_0)|$
for $d$-ary trees and regular trees. Clearly $|\calf_{r,T}^n(x_0)|\le
|\calf_{T}^n(x_0)|$.

\begin{proposition}
Let $T_d$ be a $d$-ary tree with root $x_0$. Then, for all $n\ge d$:
$$
|\calf_{r,T_d}^n(x_0)|=
a_n - \sum_{m_1+\ldots+m_{d}=n}a_{m_1}\ldots a_{m_{d}};
$$
where $a_n=|\calf_{T_d}^n(x_0)|$. \emph{(Note that $|\calf_{r,T_d}^n(x_0)|=0$ if $n < d$).}
\end{proposition}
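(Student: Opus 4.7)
The plan is to split $\calf_{T_d}^n(x_0)$ into rooted contours (those containing at least one edge incident to $x_0$) and non-rooted contours, so that
$$
a_n \;=\; |\calf_{r,T_d}^n(x_0)| \;+\; N_n,
$$
where $N_n$ counts the non-rooted contours of size $n$. Solving for $|\calf_{r,T_d}^n(x_0)|$, the whole proposition reduces to showing
$$
N_n \;=\; \sum_{m_1+\cdots+m_d=n} a_{m_1}\cdots a_{m_d},
$$
with the convention $a_0=0$ killing the degenerate terms (each $m_i$ will automatically be positive in what follows).

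To count $N_n$ I would set up the following natural bijection. Let $y_1,\dots,y_d$ be the children of $x_0$ and let $T^{(i)}$ be the subtree of $T_d$ rooted at $y_i$; since each $T^{(i)}$ is isomorphic to $T_d$, we have $|\calf_{T^{(i)}}^{m}(y_i)|=a_m$. For a non-rooted contour $C$, set $C_i := C\cap E(T^{(i)})$. The edges of $T_d$ partition into the $d$ edges at $x_0$ (none of which are in $C$) together with $E(T^{(1)}),\dots,E(T^{(d)})$, so the $C_i$'s partition $C$ and $\sum_i |C_i|=n$. The claim is that $C\mapsto(C_1,\dots,C_d)$ is a bijection from the non-rooted contours of size $n$ onto
$$
\bigsqcup_{m_1+\cdots+m_d=n}\calf_{T^{(1)}}^{m_1}(y_1)\times\cdots\times\calf_{T^{(d)}}^{m_d}(y_d).
$$

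To verify well-definedness I would argue that because $x_0\in I_C$ and none of the edges $x_0y_i$ is cut, each $y_i$ lies in $I_C$; hence $I_C\cap V(T^{(i)})$ is a nonempty finite component of $T^{(i)}\setminus C_i$ containing $y_i$. Minimality of $C_i$ inside $T^{(i)}$ is inherited by restriction: if removing a single edge $e\in C_i$ from $C$ left a finite component in $T_d$, the same would happen in $T^{(i)}$, contradicting minimality of $C$. For the reverse direction, given $(C_1,\dots,C_d)$ with $C_i\in\calf_{T^{(i)}}^{m_i}(y_i)$, I would show that $C:=C_1\cup\cdots\cup C_d$ is itself a contour: the unique finite component of $T_d\setminus C$ is $\{x_0\}\cup I_{C_1}\cup\cdots\cup I_{C_d}$, and for minimality at $e\in C_j$ the minimality of $C_j$ supplies an infinite ray from $y_j$ in $T^{(j)}\setminus(C_j\setminus\{e\})$, which extends through $x_0$ into the other subtrees and absorbs each finite piece $I_{C_i}$, $i\neq j$, into a single infinite component.

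The main obstacle is precisely this minimality bookkeeping in the inverse direction: one must check that restoring one edge $e\in C_j$ simultaneously destroys \emph{every} finite component of $T_d\setminus(C\setminus\{e\})$, including each $I_{C_i}$ with $i\neq j$, and not merely the one directly affected. Once this structural verification is in place, the bijection is immediate and grouping the $d$-tuples by the sizes $m_i=|C_i|$ gives $N_n=\sum a_{m_1}\cdots a_{m_d}$, which combined with $a_n=|\calf_{r,T_d}^n(x_0)|+N_n$ yields the claimed formula.
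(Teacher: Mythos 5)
Your proof is correct, and at its core it rests on the same decomposition as the paper's: classify contours by their behaviour at the root, observe that the contours avoiding every root edge factor as $d$-tuples of contours in the child subtrees, and subtract. The paper packages this in one line of generating functions, $f(X)=(X+f_{T_d}(X))^{d}-(f_{T_d}(X))^{d}=f_{T_d}(X)-(f_{T_d}(X))^{d}$, reusing the equation $f_{T_d}=(X+f_{T_d})^{d}$ already established in Proposition \ref{darytree}, and then reads off coefficients; you instead exhibit the bijection $C\mapsto(C_1,\dots,C_d)$ explicitly and verify the contour axioms (unique finite component, minimality) in both directions. What your route buys is precisely the verification the paper leaves implicit --- in particular the point you correctly flag as the main obstacle, that restoring an edge $e\in C_j$ leaves \emph{no} finite component because the infinite component of $y_j$ in $T^{(j)}\setminus(C_j\setminus\{e\})$ passes through $x_0$ and absorbs every $I_{C_i}$, $i\neq j$, while all remaining components are already infinite components of some $T^{(i)}\setminus C_i$. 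One small slip: in arguing that each $C_i$ is minimal in $T^{(i)}$ you state the implication backwards; the direction you need is that a finite component of $T^{(i)}\setminus(C_i\setminus\{e\})$ yields a finite component of $T_d\setminus(C\setminus\{e\})$ (it stays finite even if it contains $y_i$, since it then only gains $x_0$ and the finite pieces cut off in the other subtrees), contradicting minimality of $C$. With that direction fixed, the argument is complete and fully equivalent to the paper's.
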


\begin{proof}
Let $f_{T_d}(X)=\sum_{n\ge 1}a_nX^n$ and  $f(X)=\sum_{n\ge 1}c_nX^n$ be
the generating functions with coefficients  $a_n=|\calf_{T_d}^n(x_0)|$ and
$c_n=|\calf_{r,T_d}^n(x_0)|$ respectively. For each edge incident
with $x_0$
we can add it or not to the contour $C$. Repeating the same process as
we did in Proposition \ref{darytree}, if we do not add an edge we carry
the root to the other endpoint of this edge.  By the same proposition we
know $f_{T_d}(X)=(X + f_{T_d}(X))^{d}$. Thus
$$
f(X)=(X + f_{T_d}(X))^{d} - (f_{T_d}(X))^{d}=f_{T_d}(X) - (f_{T_d}(X))^{d}.
$$
\end{proof}

\begin{proposition}
Let $T$ be a $(d+1)$-regular tree with root $x_0$. Then, for $n\ge d+1$:
$$
|\calf_{r,T}^n(x_0)|=
b_n - \sum_{m_1+\ldots+m_{d+1}=n}a_{m_1}\ldots a_{m_{d+1}};
$$
\end{proposition}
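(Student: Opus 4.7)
The plan is to mirror the generating-function argument used in the previous rooted-contour proposition (for $d$-ary trees), adjusting for the fact that the root of a $(d+1)$-regular tree has $d+1$ incident edges rather than $d$. First I would introduce the generating functions
$$
f_{T_d}(X)=\sum_{n\ge 1}a_nX^n, \qquad g_T(X)=\sum_{n\ge 1}b_nX^n, \qquad g_r(X)=\sum_{n\ge 1}c_nX^n,
$$
where $c_n=|\calf_{r,T}^n(x_0)|$. The key geometric observation is that each of the $d+1$ neighbors of $x_0$ in $T$ is the root of a $d$-ary subtree, since removing the edge back to $x_0$ leaves $d$ children below it.

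Next I would reprove (or invoke) the analogue of Proposition \ref{regular_tree} in the form
$$
g_T(X)=(X+f_{T_d}(X))^{d+1},
$$
using the same decomposition as in Proposition \ref{darytree}: for each of the $d+1$ edges incident with $x_0$ one either includes the edge in the contour (contribution $X$) or carries the root across this edge into the corresponding $d$-ary subtree (contribution $f_{T_d}(X)$), and the choices on the $d+1$ sides are independent because $T$ is a tree.

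Now for rooted contours, at least one of the $d+1$ edges at $x_0$ must belong to $C$. Thus the generating function is obtained from $g_T(X)$ by excluding the term where none of the $d+1$ initial edges is chosen; this excluded term is exactly $f_{T_d}(X)^{d+1}$, giving
$$
g_r(X)=(X+f_{T_d}(X))^{d+1}-f_{T_d}(X)^{d+1}=g_T(X)-f_{T_d}(X)^{d+1}.
$$
Extracting the coefficient of $X^n$ and expanding the product $f_{T_d}(X)^{d+1}$ as a convolution yields
$$
c_n=b_n-\sum_{m_1+\ldots+m_{d+1}=n}a_{m_1}\cdots a_{m_{d+1}},
$$
which is the claimed formula (valid for $n\ge d+1$, since one needs at least $d+1$ edges in a rooted contour and the convolution is empty below that threshold). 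The main obstacle is really just a bookkeeping one, namely justifying that the map from a contour to the ordered tuple of ``what happens on each of the $d+1$ branches'' is a bijection onto the appropriate set of tuples; this is the exact analogue of what was done in the $d$-ary case and follows from the tree structure of $T$, so no new idea is required.
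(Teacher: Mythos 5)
Your proposal is correct and follows essentially the same route as the paper: the authors likewise write the rooted-contour generating function as $g(X)-(f_{T_d}(X))^{d+1}$, subtracting from $(X+f_{T_d}(X))^{d+1}$ the term in which no edge incident to $x_0$ is selected. Your write-up simply spells out the branch-by-branch decomposition that the paper leaves implicit by referring back to the previous two propositions.
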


\begin{proof}
Using a similar argument to the one used in the previous proof,
let
$f(X)=\sum_{n\ge 1}d_nX^n$ be the generating function with coefficients
$d_n=|\calf_{r,T}^n(x_0)|$ and let 
$g(X)=\sum_{n\ge 1}b_nX^n$ be the generating
function from Proposition \ref{regular_tree}.  Then $f(X)= g(X) -
(f_{T_d}(X))^{d+1}.$
\end{proof}

\section{Infinitely many contours 
of size $n$}\label{Infinitely many contours with size $n$}

A natural question is to study when we have infinitely many contours for
some size $n$ whose finite connected component contains a fixed vertex
$x_0$. We can characterize the trees with this property.

\begin{notation}\label{curto}
Let \(G=(V,E)\) be a graph. For each finite independent path $\g$ of~\(G\)
linking two vertices $x$ and $y$, remove all the edges (and inner
vertices) of $\g$ and add the edge $xy$. Denote this new graph that
is a minor of $G$, possibly with fewer edges, by $\tilde{G}$.
\end{notation}

In the next lemma and proposition the notation $\tilde{G}$ is used for
this special case of minor.

\begin{lemma}\label{finitecontour}
Let ~$T$ be a tree with  root~$x$ without leaves. Suppose that each
independent path of $T$ has finite length. Then~$|\calf_T^n(x)|<+\infty$
if and only if~$|\calf_{\tilde{T}}^n(x)|<+\infty$.
\end{lemma}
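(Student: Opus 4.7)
The plan is to construct a surjection $\F\colon \calf_T^n(x) \to \calf_{\tilde{T}}^n(x)$ with finite fibres; finiteness of either set then forces finiteness of the other. Each edge $\tilde{e}$ of $\tilde{T}$ arises from the suppression of a maximal independent path $\g_{\tilde{e}}$ of $T$; by hypothesis this path has some finite length $\ell(\tilde{e})\ge 1$, and for distinct~$\tilde{e}$ the paths are edge-disjoint. Given $C\in \calf_T^n(x)$, I would set
$$
\F(C) \;=\; \{ \tilde{e} \in E(\tilde{T}) : \g_{\tilde{e}} \cap C \neq \emptyset \}.
$$

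The key preliminary observation is that $|\F(C)|=|C|=n$, which reduces to the claim that each maximal independent path~$\g_{\tilde{e}}$ contains \emph{at most one} edge of~$C$. If two distinct edges $e_1,e_2\in C$ both lay on $\g_{\tilde{e}}$, then $C\setminus\{e_1\}$ would still separate $I_C$ from infinity (any path leaving $I_C$ through $\g_{\tilde{e}}$ still has to cross $e_2$), contradicting minimality of $C$. Next, $\F(C)$ is itself a contour of $\tilde{T}$: suppression of degree-$2$ vertices induces a bijection between the connected components of $T\setminus C$ and those of $\tilde{T}\setminus \F(C)$, preserving the finite/infinite dichotomy, so $I_C$ corresponds to the unique finite component of $\tilde{T}\setminus \F(C)$ containing $x$; minimality on one side translates to minimality on the other via the same bijection.

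For surjectivity and the fibre count: given any $\tilde{C}\in \calf_{\tilde{T}}^n(x)$, picking for each $\tilde{e}\in \tilde{C}$ any one of the $\ell(\tilde{e})$ edges of $\g_{\tilde{e}}$ yields a contour $C\in \calf_T^n(x)$ with $\F(C)=\tilde{C}$, and every preimage arises in this way. Hence
$$
|\F^{-1}(\tilde{C})| \;=\; \prod_{\tilde{e}\in \tilde{C}}\ell(\tilde{e}) \;<\; \infty,
$$
because $|\tilde{C}|=n$ and each $\ell(\tilde{e})$ is finite by hypothesis. The equivalence stated in the lemma then reads off from
$$
|\calf_T^n(x)| \;=\; \sum_{\tilde{C}\in \calf_{\tilde{T}}^n(x)}|\F^{-1}(\tilde{C})|,
$$
since every fibre is nonempty (so $|\calf_{\tilde{T}}^n(x)|=\infty$ implies $|\calf_T^n(x)|=\infty$) and a finite sum of positive finite integers is finite (so $|\calf_{\tilde{T}}^n(x)|<\infty$ implies $|\calf_T^n(x)|<\infty$).

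The main obstacle I anticipate is the careful justification in the second paragraph that the suppression of degree-$2$ vertices induces a component-preserving bijection in the required sense, together with the boundary case in which the root $x$ itself has degree~$2$ in $T$; for this one should agree at the outset that $x$ is always kept as a vertex of $\tilde{T}$, splitting any maximal independent path through $x$ into two pieces meeting at~$x$.
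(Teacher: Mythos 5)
Your proposal follows essentially the same route as the paper: the identity $|\calf_T^n(x)|=\sum_{\tilde{C}\in\calf^n_{\tilde{T}}(x)}\prod_{\tilde{e}\in\tilde{C}}\ell(\tilde{e})$ you arrive at is exactly the displayed equation in the paper's proof, and your surjection--with--finite--fibres formulation simply makes explicit the correspondence that the paper leaves implicit (including the root convention for $\tilde{T}$, which the paper does not address at all).

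One sub-argument is wrong as written, however. To show that a maximal independent path $\g_{\tilde{e}}$ carries at most one edge of a contour $C$, you argue that two such edges $e_1,e_2$ would contradict minimality because ``any path leaving $I_C$ through $\g_{\tilde{e}}$ still has to cross $e_2$.'' That is not so: if $e_1$ and $e_2$ both lie on $\g_{\tilde{e}}$, they flank the subpath $I_C\cap \g_{\tilde{e}}$ from opposite sides, so a path escaping through $e_1$ runs along $\g_{\tilde{e}}$ \emph{away} from $e_2$; hence $C\setminus\{e_1\}$ genuinely fails to confine $x$ and no minimality violation occurs. Indeed, when the root has degree two, the set consisting of two suitable edges of the maximal independent path through $x$ is a perfectly minimal contour with both edges on one path. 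The claim you need is nevertheless true under the convention you adopt in your final paragraph (always retain $x$ as a vertex of $\tilde{T}$, splitting any independent path through it): two edges of $C$ on a single path would force $I_C$ to consist entirely of inner, degree-two vertices of that path, which is impossible once $x$ is never an inner vertex. Replace the minimality argument by this structural one and the proof is complete and coincides with the paper's.
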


\begin{proof}
For each contour~$C=\{e_1\ldots,e_n\}$ of~$\tilde{T}$, the
contour is associated to a  (unique) family of independent
paths~$\{\gamma_1,\ldots,\gamma_n\}$ of~$T$. Then,
$$
\sum_{C\in \calf^n_{\tilde{T}}(x)}\prod_{i=1}^n | 
\gamma_i  | =|\calf_{T}^n(x)|.
$$
As the sum and the product are finite, 
we obtain~$|\calf_{T}^n(x)|<+\infty$. The converse is analogous.
\end{proof}

Thus we obtain the following characterization:

\begin{theorem}\label{infinitecontour}
Let~$T$ be a locally finite rooted tree with a root $x$ and without
leaves. Then there exists~$n\ge 1$ such that~$|\calf_T^n(x)|=+\infty$
if and only if ~$T$ has an infinite independent path.
\end{theorem}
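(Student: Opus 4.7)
The plan is to prove the biconditional by separately handling each implication. For the $(\Leftarrow)$ direction, I would use Proposition \ref{catalan} to convert the problem of counting contours of size $n$ containing $x$ into counting finite subtrees containing $x$ with external boundary of size $n$, and then build infinitely many such subtrees by progressively extending along an infinite independent path. For the $(\Rightarrow)$ direction, I would argue the contrapositive: assuming every independent path of $T$ is finite, I would combine Lemma \ref{finitecontour} with Theorem \ref{alon} applied to the contracted tree $\tilde{T}$ from Notation \ref{curto} to force $|\calf_T^n(x)| < \infty$ for every $n$.

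For $(\Leftarrow)$, let $\gamma = (v_0, v_1, v_2, \ldots)$ be an infinite independent path. After reparametrizing $\gamma$ by its suffix starting at the vertex of $\gamma$ closest to $x$, I may assume the unique path from $x$ to $v_0$ in $T$ meets $\gamma$ only at $v_0$. Take $B_0$ to be the subtree induced by this $x$-to-$v_0$ path, and set $B_k := B_0 \cup \{v_1, \ldots, v_k\}$ for $k \geq 0$. Because each $v_i$ with $i \geq 1$ has degree two in $T$ with neighbors $v_{i-1}$ and $v_{i+1}$, passing from $B_{k-1}$ to $B_k$ removes $v_k$ from the external boundary and inserts $v_{k+1}$, with no other change. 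Hence $|\partial_v^{ext}(B_k)|$ is independent of $k$, producing infinitely many distinct subtrees containing $x$ with a common external boundary size, which by Proposition \ref{catalan} yields infinitely many contours of a common size containing $x$.

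For $(\Rightarrow)$, assume every independent path of $T$ is finite and form $\tilde{T}$, treating the root $x$ as an endpoint rather than an interior vertex of every independent path in which it participates, so that $x$ persists in $\tilde{T}$. Every degree-two vertex of $T$ other than $x$ is the interior of some maximal independent path and is thus absorbed into a contraction, so every non-root vertex of $\tilde{T}$ has degree at least three, hence at least two children; the root $x$ itself has at least two children, since $T$ has no leaves. Theorem \ref{alon} with $r = 2$ then bounds $|\calf_{\tilde{T}}^n(x)| \leq \binom{2n-2}{n-2} < \infty$, and Lemma \ref{finitecontour} transfers this finiteness back to $T$. The main subtlety is the degree-two root case: without the convention above, a root of degree two would itself be swallowed by a contraction and the reduction to $\tilde{T}$ would no longer make sense; once this is handled, the remaining argument is a clean combination of the machinery developed earlier in the paper.
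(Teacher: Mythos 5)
Your proposal is correct and follows essentially the same route as the paper: the forward direction is the contrapositive via Lemma \ref{finitecontour} plus a finiteness bound for trees in which every vertex has at least two children (the paper invokes Theorem \ref{twochildren}, you invoke Theorem \ref{alon}; either works), and the backward direction slides a cut along the infinite independent path. Your backward direction is phrased via Proposition \ref{catalan} and the growing subtrees $B_k$ with constant external boundary size, whereas the paper fixes one contour $C$ meeting the path and swaps its edge $e$ for other edges $e'$ of the path --- the same idea in vertex rather than edge language --- and your explicit treatment of the degree-two root when forming $\tilde{T}$ is a welcome precision that the paper leaves implicit.
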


\begin{proof} 
If we assume that $|\calf_T^n(x)|=+\infty$, the above is a
consequence of Lemma \ref{finitecontour} combined with
Theorem \ref{twochildren}. For the converse, take an
infinite independent path $\g$. Let~$C$ be a contour of~$T$
that contains an edge~$e$ of~$\gamma$. For all edge~$e'$
of~$\gamma\setminus\{e\}$, we have that~$C'=(C\setminus \{e\})\cup
\{e'\}$ is a contour of~$T$ and~$|C|=|C'|$. Therefore, taking~$n=|C|$
we obtain~$|\calf_T^n(x)|=+\infty$.
\end{proof}

\begin{proposition}
Let $T$ be an infinite, locally finite rooted tree with root $x_0$
without leaves. Suppose that $T$ has an infinite independent
path. Then there exists a sequence $(n_i)_{i\ge 1}$ such that
$|\calf^{n_i}_T(x_0)|=+\infty$ if and only if there is an infinite number
of vertices in $T$ with degree at least three.
\end{proposition}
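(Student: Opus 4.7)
The plan rests on a single handshake identity: for any finite connected subtree $B\subseteq T$ containing $x_0$,
\[
|\partial^{ext}_v(B)| \;=\; 2 + \sum_{v\in B}\bigl(\d(v)-2\bigr),
\]
which follows from the fact that $T$ is a tree (each boundary vertex corresponds to exactly one edge leaving $B$, while $B$ has $|B|-1$ internal edges, giving $\sum_{v\in B}\d(v)=2(|B|-1)+|\partial^{ext}_v(B)|$). Crucially, degree-$2$ vertices contribute $0$, so the boundary size depends only on $B\cap V_{\ge 3}$, where $V_{\ge 3}$ denotes the set of vertices of $T$ with degree at least $3$. By Proposition~\ref{catalan}, contours in $\calf^n_T(x_0)$ are in bijection with finite connected subtrees of $T$ containing $x_0$ whose external boundary has size $n$, so the question reduces to counting such subtrees.

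For the direction ``(exists sequence) $\Rightarrow$ (infinite $V_{\ge 3}$)'' I argue the contrapositive: if $V_{\ge 3}$ is finite, then $B\cap V_{\ge 3}$ assumes at most $2^{|V_{\ge 3}|}$ values, and the identity shows that $|\partial^{ext}_v(B)|$ ranges over a finite set. In particular $\{n : |\calf^n_T(x_0)|=\infty\}$ is finite and no such sequence $(n_i)$ exists.

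For the converse direction, let $\gamma=(v_0,v_1,v_2,\ldots)$ be the given infinite independent path (so $\d(v_i)=2$ for every $i\ge 1$) and build inductively a chain $B_1\subsetneq B_2\subsetneq\cdots$ of finite connected subtrees, each containing $x_0$ together with the two vertices $v_0,v_1$. Take $B_1$ to be the minimal subtree of $T$ containing $x_0$, one vertex $w_1\in V_{\ge 3}$, and $\{v_0,v_1\}$. Given $B_m$, choose $w_{m+1}\in V_{\ge 3}\setminus B_m$ (possible because $V_{\ge 3}$ is infinite while $B_m$ is finite), and set $B_{m+1}:=B_m\cup P_m$, where $P_m$ is the unique path from $B_m$ to $w_{m+1}$ in $T$. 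Writing $n_m:=|\partial^{ext}_v(B_m)|$, the handshake identity yields
\[
n_{m+1}-n_m \;=\; \sum_{v\in B_{m+1}\setminus B_m}\bigl(\d(v)-2\bigr) \;\ge\; \d(w_{m+1})-2 \;\ge\; 1,
\]
so $(n_m)$ is strictly increasing.

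To promote this to $|\calf^{n_m}_T(x_0)|=\infty$, fix $m$ and let $L_m\ge 1$ be the largest integer with $v_0,\ldots,v_{L_m}\in B_m$ (finite because $B_m$ is finite but $\gamma$ is infinite). For each $j\ge 1$, set $B_m^{(j)}:=B_m\cup\{v_{L_m+1},\ldots,v_{L_m+j}\}$; this is a finite connected subtree containing $x_0$, and by the handshake identity it has boundary size exactly $n_m$, since every newly adjoined vertex has degree $2$ in $T$. As the $B_m^{(j)}$ are pairwise distinct, $|\calf^{n_m}_T(x_0)|=\infty$ as required. The main technical point to watch is the uniform availability of this $\gamma$-extension step: it is secured by maintaining $\{v_0,v_1\}\subseteq B_1\subseteq B_m$ throughout the induction together with the finiteness of $B_m$ against the infinitude of $\gamma$.
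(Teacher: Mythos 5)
Your proof is correct, and it takes a genuinely different route from the paper's in both directions, with the single handshake identity $|\partial^{ext}_v(B)|=2+\sum_{v\in B}(\d(v)-2)$ doing all the work. For the forward implication the paper contracts the finite independent paths of $T$, replaces the infinite ones by leaves, and observes that the resulting tree $T'$ is finite, so only finitely many boundary sizes can occur; your identity reaches the same conclusion more directly and more quantitatively (at most $2^{|V_{\ge 3}|}$ sizes, where $V_{\ge 3}$ is the set of vertices of degree at least three), since degree-two vertices are invisible to the boundary count. For the converse the paper takes the edge-spheres $E_k$ around $x_0$, asserts that $|E_k|\to\infty$ because $V_{\ge 3}$ is infinite, and then swaps the edge of $E_{k_i}$ lying on the infinite independent path for any other edge of that path; you instead grow a nested family of Steiner trees absorbing one new vertex of $V_{\ge 3}$ at a time, so that the boundary sizes strictly increase by the same identity, and then pump each tree along the independent path. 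The two mechanisms agree in spirit --- the independent path supplies infinitely many subtrees with identical boundary --- but your version replaces the paper's unproved ``$|E_k|\to\infty$'' step with an explicit computation and keeps both directions under one formula, at the cost of leaning on the bijection of Proposition \ref{catalan} (which you correctly invoke). One point worth making explicit in a final write-up: like the paper, you read the statement as requiring infinitely many \emph{distinct} values $n_i$; under a literal reading allowing constant sequences the forward implication would be false, since Theorem \ref{infinitecontour} already yields a single $n$ with $|\calf^n_T(x_0)|=+\infty$ from the infinite independent path alone (e.g.\ three infinite rays glued at $x_0$ have infinitely many contours of size three but only one vertex of degree three).
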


\begin{proof}
Suppose that there exist only a finite number of vertices in $T$
with degree at least three. Take $\tilde{T}$ constructed as in 
Notation \ref{curto}.  If an independent path is infinite, we replace this
independent path by a leaf. This new tree we denote by $T^{'}$. Since $T$
has an infinite independent path, $T^{'}$ has at least one leaf. Moreover,
$T^{'}$ is a finite tree because $T$ has only a finite number of vertices
with degree at least three. Let $B$ be a subtree of $T^{'}$ such that $x_0
\in B$ and $B$ does not contain any leaf. Let $C$ be the set of external
boundary edges of $B$.  For each $C$ constructed in this way we obtain a
family of contours of $T$ of the same size and any contour in $T$
comes from
some external boundary edges for some such $B$.  
As we have a finite number of
subtrees of $T^{'}$ that do not contain leaves, there exists $n_0\ge 1$
such that for all $n\ge n_0$ we have $|\calf^{n}_T(x_0)|=0$.

For the converse, suppose that there exists an infinite number of vertices
in $T$ with degree at least three. Let $E_k$ be the set of edges whose
distance from $x_0$ is $k$.  $E_k$ is a contour. Since the number of
vertices in $T$ with degree at least three is infinite, the number
of edges in each $E_k$ in tending to infinity when we increase $k$. Let
$(k_i)_{i}$ be an increasing sequence of natural numbers such that $n_i
=  |E_{k_i}|$ is also an increasing sequence. Let $\g$ be an infinite
independent path. Note that there exists  $i_0$ such that $E_{k_i}$
contains an edge  $e_i$ of the infinite independent path $\g$ for all $i
\geq i_0$. Then, since we can replace $e_i$ by any other edge of $\g$
and obtain a new contour of the same size $n_i$, we have infinitely many
contours of size $n_i$.
\end{proof}

\section*{Final Remark}

The Peierls strategy to look for contours involving a vertex fails if $w$ in (\ref{fundamental})  depends only on the size of the contours when we have infinitely many contours of the same size. However, in \cite{Rozi4} Rozikov studied an example of an Ising model type on $\mathbb{Z}$ where we have infinitely many contours of size $2$ involving the vertex $0$. He adapted the Peierls argument to prove the phase transition for the model. In this case $w(C)$ must depend on the position of the contour $C$ in the graph, this is the usual situation when the hamiltonian of the model it is not translation invariant, see \cite{BC, Pf}.

\section*{Acknowledgments}

R. Bissacot and E. O. Endo thank Professor Nicolau C. Saldanha, who
pointed out the connection between the results and the Catalan numbers
in an earlier version of this manuscript. They thank A. Procacci and
U. Rozikov for references and comments. They also thank Paulo A. da
Veiga and the organizers of the meetings "\textit{New interactions of
Combinatorics and Probability}" and "\textit{4th Workshop in Stochastic
Modeling}" where they had the opportunity to discuss this note with
colleagues at ICMC-USP and UFSCAR in S\~ao Carlos, Brazil.

\end{document}